\newtheorem{proposition}{Proposition}[section]
\newtheorem{theorem}{Theorem}
\newtheorem{lemma}[proposition]{Lemma}
\theoremstyle{definition}
\newtheorem{definition}[proposition]{Definition}
\newtheorem{remark}[proposition]{Remark}
\DeclareMathAlphabet{\mathpzc}{OT1}{pzc}{m}{it}
\newcommand\unnumberedfootnote[1]{ %
        \let\temp=\thefootnote %
        \renewcommand{\thefootnote}{}%
        \footnote{#1}%
        \let\thefootnote=\temp%
        \addtocounter{footnote}{-1}}
\begin{document}
\title{\LARGE The range of once-reinforced random walk in one
  dimension}

\thispagestyle{empty}

\author{{\sc by  P. Pfaffelhuber and J. Stiefel} \\[2ex]
  \emph{Albert-Ludwigs University Freiburg} } \date{\today}

\maketitle

\unnumberedfootnote{\emph{AMS 2000 subject classification.} {\tt 60J15}
  (Primary) {\tt, 60K99} (Secondary).}

\unnumberedfootnote{\emph{Keywords and phrases.} Reinforced random
  walk; range process; scaling limit}

\vspace*{-5ex}

\begin{abstract}
  \noindent
  We study once-reinforced random walk (ORRW) on $\mathbb Z$. For this
  model, we derive limit results on all moments of its range using
  Tauberian theory. 
\end{abstract}

\section{Introduction}
Interacting random walks are a class of (mostly non-Markovian) models
where the next step of (mostly simple) random walk depends on its
path. Some models tend to visit new sites with high probability. Such
models arose in chemical physics as a model for long polymer chains,
and are discussed in some detail in Chapter 6 of \cite{Lawler2013}. A
particularly prominent example is the self-avoiding walk, which visits
every site not more than once. Other models are the myopic (or true)
self-avoiding walk, which has higher chances to move to sites it
visited less than others. Another class of random walk models --
usually referred to as reinforced random walks -- visit sites (or walk
along edges) more likely they have already seen; see e.g.\
\cite{Davis1990} and \cite{Pemantle2007}.

Here, we study a model which appeared as the hungry random walk in the
Physics literature \citep{schilling2017clearing} for mimicking
chemotaxis. For the model in $d=1$, every site in $\mathbb Z$ contains
food, which is eaten up once the walker visits the site. In addition,
the walker rather visits sites with food, if it sees a neighboring
site containing food (with probability proportional to $e^\gamma$ for
some $\gamma \in \mathbb R$) than going the other direction (with
probability proportional to 1). Two models from the probability
literature are equivalent to the hungry random walk on $\mathbb Z$:
First, the true self-avoiding walk with generalized bond-repulsion, as
studied in \cite{Toth1993}, jumps along an edge $b$ with probabilities
proportional to
$\exp(-\gamma \cdot (\text{number of previous jumps along
  $b$})^\kappa)$, which equals the hungry random walk in the limit
$\kappa\to 0$. Second, the case $\gamma<0$ is equivalent to
once-reinforced random walk or ORRW: here, every edge has initial
weight~1, and once the walker goes along an edge, the weight changes
to $c>1$. The walker then chooses its next step according to the edge
weights. Apparently, this is the same as the hungry random walker for
$\gamma = -\log c<0$ on $\mathbb Z$. Recent literature on the ORRW has
focused on recurrence and transience on various graphs (see e.g.\
\citealp{Durrett2002, Dai2005, Sellke2006, KiousSido2018}). Here, we
rather stick to $\mathbb Z$ but aim at concrete formulas for the
asymptotics of the range of ORRW in Theorem~\ref{T1}. Our
analysis is based on a simple decomposition of the inverse of the
range process as given in \eqref{eq:SRWtimes}. Notably, we
cannot compute moments of ORRW itself. At least, we give some
heuristics of the variance in Remark~\ref{rem:variance}.

In studying the ORRW, we will not restrict ourselves to $c>1$, but to
$c>0$. A scaling limit of the ORRW in this case was studied in
\cite{Davis1996}, \cite{PermanWerner1997} and
\cite{CarmonaPetitYor1998}. More precisely, it was shown (see Theorem
1.2 in \citealp{Davis1996}) that (choose $\alpha = -\beta = 1-c$) for
$1/2<c<3/2$, the sequence $X^n = (X_{nt}/\sqrt{n})_{t\geq 0}$ has a
limit $Y$ as $n\to\infty$ which solves
\begin{align}
  \label{eq:scaling}
  Y_t = B_t + (1-c) (\sup\{Y_s: s\leq t\} + |\inf\{Y_s: s\leq t\}|).  
\end{align}
More connections of our results to this equation are discussed in
Remark~\ref{rem:perBB}.

~

The paper is organized as follows: In the next Section, we give our
main result, Theorem~\ref{T1}, which gives asymptotics of all moments
of the range of the ORRW. Section~\ref{ss:31} contains some
preliminary steps for our proofs. The proof of Theorem~\ref{T1} is
given in Section~\ref{ss:32}.

\section{Results}
\label{S2}
\begin{definition}
  Let $c>0$ and $X=(X_n)_{n=0,1,2,...}$ be the stochastic process with
  $X_0=0$, and, for $n=0,1,2,...$, given $X_0,...,X_n$, and setting
  $M_n := \max_{k\leq n} X_k$ as well as $m_n:= \min_{k\leq n} X_k$,
  \begin{align*}
    & X_{n+1} = X_n + 1 \text{ with probability } = \begin{cases} \displaystyle\frac{1}{1+c} & \text{ if } X_n = M_n
      \\[2ex] \quad\displaystyle\frac{1}{2} & \text{ if }X_n \neq m_n, M_n,
      \\[2ex] \displaystyle\frac{c}{1+c} &\text{ if }X_n = m_n
    \end{cases}\\
    & \text{and }\mathbb P(X_{n+1}=X_n-1) = 1 - \mathbb P(X_{n+1} = X_n+1).
 \end{align*}
  In other words, $X_{n+1} = X_n \pm 1$ with probability proportional
  to $c$ or $1$, if $X$ has or has not visited $X_n\pm 1$ before time
  $n$. We call $X = (X_n)_{n=0,1,2,...}$ the {\em once-reinforced
    random walk} (or {\it ORRW}) on $\mathbb Z$ with parameter
  $c$. Its range by time $n$ is given by
  \begin{align}
    \label{eq:Rn}
    R_n := M_n - m_n = M_n + |m_n|. 
  \end{align} \qed
\end{definition}

\noindent
Note that only the case $c>1$ gives a reinforced walk (in the sense
that it visits previously seen sites more likely), while the walk has
self-avoiding properties for $0<c<1$. For $c=1$, it is just the
symmetric Bernoulli walk. Since our proofs work in all cases, we do
not distinguish them in the sequel.

~

\noindent
The range process $R = (R_n)_{n=0,1,...}$ is a nondecreasing process
with jumps of size 1, and is our main object of study. The following
ideas are essential to understand our approach. The random time
$S_k := \inf\{n: R_n = k\}$ is the first time the ORRW has range $k$
(such that $k\mapsto S_k$ is the generalized inverse of
$n\mapsto R_n$), and $T_i := S_{i+1} -S_{i}$ is the time between
$R_n=i$ for the first time and $R_n=i+1$. In order to study $T_i$ (for
$i=1,2,...$), we note that $T_i = 1$ with probability
$1/(1+c)$. Otherwise, the random walk moves within its range (which is
$i$ at that time) until it first hits its maximum or minumum, which
takes time $\tau_i$, the hitting time of $\{-1, i-1\}$ of a simple
random walk starting in~0. Again, the chance to increase the range is
$1/(1+c)$ etc. The number of times the random walk needs a chance of
$1/(1+c)$ to increase its range is a geometrically distributed random
variable $Y_i$ with parameter $1/(1+c)$. (Note that $Y_i=0$ is
possible, i.e.\ we must use the shifted geometrical distribution.) In
total, this gives
\begin{equation}\label{eq:SRWtimes}
  S_k = 1 + \sum_{i=1}^{k-1} T_i, \quad k=1,2,... \text{ with }
  T_i = 1 +  \sum_{j=1}^{Y_i} \left(1+\tau_i^j\right),\quad i = 1,2,...
\end{equation}  
(where we define the empty sum to be~0). Here, $\tau_i^k, k= 1,2,...$
are independent and identically distributed as $\tau_i$ above and also
independend from $Y_i, i=1,2,...$ Using \eqref{eq:SRWtimes}, we can
compute the generating function of $S_k$, (see Lemma~\ref{l:gentaui})
and then use $\mathbb P(R_n>k) = \mathbb P(S_k<n)$ in order to obtain
results on $R_n$ (see Lemma~\ref{l:Hls} and Proposition~\ref{P:Hell}).

~

\noindent
We are now ready to formulate our main result, which will be proved in
Section~\ref{ss:32}.  Throughout, we will write $a_n \sim b_n$ if
$a_n/b_n \xrightarrow{n\to\infty} 1.$

\begin{theorem}[Asymptotic moments of the range]\label{T1}
  Let $R_n$ be as in \eqref{eq:Rn} the range of the ORRW with
  parameter $c>0$. Then,
  \begin{align}\label{eq:T1}
    \mathbb E\Big[\Big(\frac{R_n}{\sqrt n}\Big)^{\ell}\Big]
    & \sim \frac{1}{2^{(\ell-2)/2}
      \Gamma(\ell/2)} \cdot J_\ell(c), \qquad \ell=1,2,...
      \intertext{with}
      J_\ell(c) & := 2^{2c} 
                  \int_0^\infty x^{\ell-1} \Big(\frac{e^{x}}{(e^x+1)^{2}}\Big)^cdx.\notag
  \end{align}
  In particular,
  \begin{align*}
    \mathbb E\Big[\frac{R_n}{\sqrt{n}}\Big]
    & \sim
      \sqrt{\frac{2}{\pi}} \cdot J_1(c)  \qquad \text{ and }\qquad 
      \mathbb E\Big[\frac{R_n^2}{n}\Big] \sim
      J_2(c).        
  \end{align*}
\end{theorem}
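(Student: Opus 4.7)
The plan is to work with Laplace transforms in $n$ and invert at the end via a Tauberian theorem. Since $\{R_n>k\}=\{S_k<n\}$, one has
\[\mathbb E[R_n^\ell]=\sum_{k\ge 0}\bigl((k+1)^\ell-k^\ell\bigr)\,\mathbb P(S_k<n),\]
and hence
\[\sum_{n\ge 0}e^{-sn}\,\mathbb E[R_n^\ell]=\frac{e^{-s}}{1-e^{-s}}\sum_{k\ge 0}\bigl((k+1)^\ell-k^\ell\bigr)\,\mathbb E[e^{-sS_k}].\]
Because $n\mapsto R_n^\ell$ is nondecreasing, Karamata's Tauberian theorem will recover $\mathbb E[R_n^\ell]$ from the behaviour of this transform as $s\to 0+$, so the whole analysis reduces to an asymptotic description of $\mathbb E[e^{-sS_k}]$.

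Using the decomposition \eqref{eq:SRWtimes}, the independence of the $Y_i$ and $\tau_i^j$ factorizes this as $\mathbb E[e^{-sS_k}]=e^{-s}\prod_{i=1}^{k-1}\mathbb E[e^{-sT_i}]$, and the shifted geometric law of $Y_i$ (parameter $1/(1+c)$) gives
\[\mathbb E[e^{-sT_i}]=\frac{e^{-s}}{1+c-c\,e^{-s}\phi_i(e^{-s})},\qquad \phi_i(z):=\mathbb E[z^{\tau_i}].\]
The classical gambler's-ruin calculation yields $\phi_i(e^{-s})=\cosh((i/2-1)\theta)/\cosh(i\theta/2)$ with $\cosh\theta=e^s$ (presumably Lemma~\ref{l:gentaui}). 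For $s\to 0$ one has $\theta\sim\sqrt{2s}$ and the expansion $\phi_i(e^{-s})=1-\theta\tanh(i\theta/2)+O(s)$, so that $\log\mathbb E[e^{-sT_i}]=-c\,\theta\tanh(i\theta/2)+O(s)$.

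The heart of the argument is a scaling limit for the resulting Riemann sum. Writing $s=\sigma/n$ and $k=\lfloor\sqrt n\,y\rfloor$, one has $i\theta/2\to u\sqrt{\sigma/2}$ under $u=i/\sqrt n$, and the antiderivative $\int\tanh=\log\cosh$ gives
\[-c\sum_{i=1}^{k-1}\theta\tanh(i\theta/2)\;\longrightarrow\;-2c\log\cosh\bigl(y\sqrt{\sigma/2}\bigr)=\log\cosh^{-2c}\bigl(k\sqrt{s/2}\bigr).\]
Inserting this into the Laplace transform, using $(k+1)^\ell-k^\ell\sim\ell k^{\ell-1}$ and substituting $x=k\sqrt{s/2}$ in a second Riemann sum,
\[\sum_n e^{-sn}\mathbb E[R_n^\ell]\;\sim\;\frac{1}{s}\cdot\ell\,(s/2)^{-\ell/2}\!\int_0^\infty x^{\ell-1}\cosh^{-2c}(x)\,dx\;=\;\frac{\ell\,J_\ell(c)}{2^{\ell/2}}\,s^{-1-\ell/2},\]
where the last equality uses the identity $e^x/(e^x+1)^2=1/(4\cosh^2(x/2))$ to rewrite $J_\ell(c)=\int_0^\infty y^{\ell-1}\cosh^{-2c}(y/2)\,dy$. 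Karamata's theorem with exponent $\alpha=1+\ell/2$ then yields $\mathbb E[R_n^\ell]\sim(\ell J_\ell(c)/2^{\ell/2})\,n^{\ell/2}/\Gamma(1+\ell/2)$, which after using $\Gamma(1+\ell/2)=(\ell/2)\Gamma(\ell/2)$ becomes the constant stated in \eqref{eq:T1}.

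The main obstacle is uniformity. The expansion of $\phi_i(e^{-s})$ and of the partial sum $\sum_{i=1}^{k-1}\log\mathbb E[e^{-sT_i}]$ must hold uniformly in $i$ across the whole scaling window, with separate control of the small-$i$ regime (where the continuum approximation is crude but the weight is small) and of the large-$y$ tails (where the integrand decays like $e^{-2cy\sqrt{\sigma/2}}$ so that dominated convergence can be applied). Supplying these uniform bounds, so that the convergence of Laplace transforms is strong enough to feed Karamata's inversion and produce a sharp leading-order asymptotic rather than merely an Abelian one, is presumably the role of Lemma~\ref{l:Hls} and Proposition~\ref{P:Hell}.
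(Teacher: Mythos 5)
Your proposal is correct and follows essentially the same route as the paper: the decomposition \eqref{eq:SRWtimes} with the gambler's-ruin generating function, the expansion $\theta\sim\sqrt{2s}$ and the Riemann-sum limit producing $\cosh^{-2c}$ (the paper's $2^{2c}\big(e^x/(e^x+1)^2\big)^c$), followed by a Hardy--Littlewood/Karamata Tauberian inversion with the monotone-density refinement, and your constants match \eqref{eq:T1}. The only cosmetic differences are that the paper works with the power series $\sum_n s^n\,\mathbb E[R_n\cdots(R_n+\ell)]$ (Lemma~\ref{l:Hls} and Proposition~\ref{P:Hell}) rather than your Laplace transform of $\mathbb E[R_n^\ell]$, and it treats the Riemann-sum approximation about as briefly as you flag it.
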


\begin{remark}[The range for $c=1$]\label{rem:c1}
  The ORRW with $c=1$ equals the the symmetric Bernoulli walk. In this
  case, several results have been obtained for the range. An early
  example is \cite{Feller1951}, who states in his (1.4) that
  \begin{align*}
    \mathbb E\Big[\frac{R_n}{\sqrt{n}}\Big] \sim \sqrt{\frac 8 \pi}, \qquad  \qquad 
    \mathbb E\Big[\Big(\frac{R_n}{\sqrt{n}}\Big)^2\Big] \sim 4\log 2.
  \end{align*}
  In this case, we compute
  \begin{align}\label{eq:Fel0}
    J_1(1) & = 4 \int_0^\infty \frac{e^x}{(e^x + 1)^2} dx = -\frac{4}{e^x+1}\Big|_{0}^\infty = 2
  \end{align}
  and Feller's result for the expectation follows from
  \eqref{eq:T1}. Moreover, using integration by parts,
  \begin{align}\label{eq:Fel1}
    J_2(1) & = 4 \int_0^\infty x \frac{e^x}{(e^x + 1)^2} dx = - 4 \frac{x}{e^x+1}\Big|_0^\infty
             + 4 \int_0^\infty \frac{1}{e^x+1} dx
    \\ & \stackrel{y=e^x} = 4 \int_1^\infty \frac{1}{y} - \frac{1}{y+1} dy = 4\int_1^2 \frac 1y dy
         = 4\log 2,\notag
  \end{align}
  which gives Feller's result for the second moment.

  In addition to these limiting results, \cite{Vallois1993} and
  \cite{Vallois1995} have computed the generating function as well as
  expectation and variance for $S_k$, given through
  \begin{align*}
    \mathbb E[S_k] = \binom{k+1}{2}, \qquad \qquad \mathbb V[S_k] = \frac{(k-1) k (k+1)(k+2)}{12}.
  \end{align*}
  These results can as well be obtained as follows: Modyfing
  \eqref{eq:SRWtimes} for the case $c=1$, we can write
  \begin{align*}
    S_k = \sum_{i=0}^{k-1} \tau_{i+2},
  \end{align*}
  where $\tau_{i+2}$ is the hitting time of $\{-1, i+1\}$ of a random
  walk starting in~0. This holds since the range increases if and only
  if such a hitting time is observed. We note that $\tau_{i+2}$ is the
  duration of play of a symmetric Gambler's ruin starting with~1 and a
  total of $i+2$ units. It is a classical result that
  $\mathbb E[\tau_{i+2}] = i+1$, and
  $\mathbb V[\tau_{i+2}] = \frac{(i+2)(i+1)i}{3}$ was e.g.\ derived in
  \cite{Bach1997}.  Summing then gives Vallois' results.
\end{remark}

\begin{remark}[$J_\ell(c)$ for integer-valued $c$]
  If $c$ is an integer, i.e.\ $c=1,2,...$, the calculations from
  \eqref{eq:Fel0} and \eqref{eq:Fel1} can be generalized and lead to
  specific expressions for $J_\ell(c)$. We just give the necessary
  steps for $\ell=1,2$, which can then be generalized for larger
  $\ell$. First, note that a straight-forward calculation shows that,
  for $c=1,2,...$
  $$ \frac{d}{dx}  \sum_{j=0}^{c-1} \binom{c-1}{j} (-1)^{c-1-j}\frac{1}{j-2c+1} (e^x+1)^{j-2c+1}
  = \Big(\frac{e^x}{(e^x+1)^2}\Big)^c.$$ Hence,
  \begin{align*}
    J_1(c) & = \sum_{j=0}^{c-1} \binom{c-1}{j} \frac{1}{j-2c+1} 2^{j+1}(-1)^{c-j}.
  \end{align*}
  Next,
  \begin{align*}
    \frac{d}{dx} \Big(x & - \log(e^x+1) + \sum_{i=1}^{j-1} \frac{1}{i(e^x+1)^i}\Big) = 1 - \frac{e^x}{e^x+1} \sum_{i=0}^{j-1} \frac{1}{(e^x+1)^{i}} = \frac{1}{(e^x+1)^j},
  \end{align*}
  and therefore, using integration by parts,
  \begin{align*}
    J_2(c) & = 2^{2c} \int_0^\infty x\Big(\frac{e^x}{(e^x+1)^2}\Big)^c dx
    \\ & = 2^{2c} \sum_{j=0}^{c-1}\binom{c-1}{j}(-1)^{c-j} \frac{1}{j-2c+1}
         \int_0^\infty \frac{1}{(e^x+1)^{2c-j-1}}dx
    \\ & = 2^{2c} \sum_{j=0}^{c-1}\binom{c-1}{j}
         (-1)^{c-j} \frac{1}{j-2c+1} \Big(x - \log(e^x+1) + \sum_{i=1}^{2c-j-2} \frac{1}{i(e^x+1)^i}\Big)\Big|_{x=0}^\infty
    \\ & = 2^{2c} \sum_{j=0}^{c-1}\binom{c-1}{j}
         (-1)^{c-j-1} \frac{1}{2c-j-1} \Big(\log(2) - \sum_{i=1}^{2c-j-2} \frac{1}{i2^i}\Big).
  \end{align*}
  For higher $\ell$, more steps using integration by parts are
  necessary.  \qed
\end{remark}

\begin{remark}[Scaling limit of ORRW]\label{rem:perBB}
  Theorem~\ref{T1} can be understood as results for the scaling limit
  \begin{align*}
    X_{n . }/\sqrt{n} \xRightarrow{n\to\infty} Y = (Y_t)_{t\geq 0},
  \end{align*}
  where $Y$ is given in~\eqref{eq:scaling}; see \cite{Davis1996} for
  the corresponding limit result. By this, we mean that the range
  $\hat R$ of $Y$ satisfies
  \begin{align*}
    \mathbb E[\hat R_t^\ell]
    & = \lim_{n\to\infty} \mathbb E[(R_{tn}/\sqrt{n})^\ell] = \frac{1}{2^{(\ell-2)/2}
      \Gamma(\ell/2)} \cdot J_\ell(c) \cdot t^{\ell/2}.
  \end{align*}
  While the convergence above was only shown for $1/2 < c < 3/2$ in
  \cite{Davis1996}, we briefly argue how this converges comes about:
  Note that $Y$ solves \eqref{eq:scaling} iff
  \begin{align}
    \label{eq:BB}
    Y_t - (1-c)(\sup\{Y_s: s\leq t\} + |\inf\{Y_s: s\leq t\}|) \text{ is a
    Brownian motion.}
  \end{align}  
  For the ORRW, note that 
  \begin{align*}
    N_n & := X_n - \frac{1-c}{1+c} \Big(\sum_{k<n} 1_{X_k = M_k} - \sum_{k<n} 1_{X_k = m_k}\Big)
          \intertext{is a martingale, since (wp$=$with probability)}
          N_{n+1}^1 - N_{n}^1 & = \begin{cases} \pm 1 & \text{ wp }\tfrac 12 \text{ if }X_n \neq m_n, M_n,
            \\ 1 - \frac{1-c}{1+c} = \frac{2}{1+c} & \text{ wp } \frac{c}{1+c} \text{ if } X_n=M_n,
            \\ -1 - \frac{1-c}{1+c} = \frac{-2}{1+c}& \text{ wp } \frac{1}{1+c} \text{ if } X_n=M_n,
            \\ +1 + \frac{1-c}{1+c} = \frac{2}{1+c}& \text{ wp } \frac{c}{1+c} \text{ if } X_n=m_n,
            \\ - 1 + \frac{1-c}{1+c} = \frac{-2c}{1+c}& \text{ wp } \frac{1}{1+c} \text{ if } X_n=m_n.
          \end{cases}
  \end{align*}
  For large $n$, we have that
  $\frac{1}{1+c} \sum_{k<nt} 1_{X_k=M_k} \sim M_{nt}$ by the law of
  large numbers, since every time with $X_k = M_k$ there is an
  independent chance of $1/(1+c)$ of increasing $M$. Moreover, a
  straight-forward calculation gives that $N$ has quadratic variation
  $$ \langle N\rangle_{nt} = nt - \Big(\frac{1-c}{1+c}\Big)^2
  \Big(\sum_{k<nt} 1_{X_k = M_k} + \sum_{k<nt} 1_{X_k = m_k}\Big) \sim
  nt - \frac{(1-c)^2}{1+c} R_{nt},$$ where the $\sim$ follows from
  $\tfrac{1}{1+c}(\sum_{k<nt} 1_{X_k = M_k} + \sum_{k<nt} 1_{X_k =
    m_k}) \sim R_n$ by the same argument using the law of large
  numbers as above. Since $\mathbb E[R_{nt}] = O(\sqrt{nt})$,
  as we have shown in Theorem~\ref{T1}, we thus have that the limit of
  $\frac{N_{n.}}{\sqrt{n}}$ as $n\to\infty$ is the same as the limit
  of
  $$ \Big(\frac{X_{nt}}{\sqrt{n}} - (1-c)\Big(\frac{M_{nt}}{\sqrt{n}} + \frac{|m_{nt}|}{\sqrt{n}}\Big)\Big)_{t\geq 0},$$
  which must be a continuous martingale with quadratic variation~$t$
  by time $t$, i.e.\ a Brownian motion. This is enough to conclude
  that scaling limits of $X$ satisfy~\eqref{eq:BB}.
\end{remark}

\begin{remark}[\text{Towards $\mathbb V[X_n/\sqrt{n}]$}]
  \label{rem:variance}
  Although we are able to asymptotically compute all moments of
  $R_n/\sqrt n$ as in Theorem~\ref{T1}, we are unable to compute
  asymptotics of (even) moments of $X_n/\sqrt n$. At least, we now
  give some thoughts and bounds of asymptotics of
  $\mathbb V[X_n/\sqrt n] = \mathbb E[(X_n/\sqrt n)^2]$.
  We observe that
  \begin{align*}
    N_n & := X_n^2 - n - 2\frac{1-c}{1+c}\sum_{k<n} |X_k|1_{X_k\in\{m_k,M_k\}},
  \end{align*}
  is a mean-zero-martingale, since (note that $m_k\leq 0 \leq M_k$)
  \begin{align*}
    N_{n+1}^1 - N_{n}^1 & = \begin{cases} \pm 2X_n &
      \text{ wp }\tfrac 12 \text{ if }X_n \neq m_n, M_n,
      \\ 2X_n - 2\frac{1-c}{1+c}X_n = 4X_n \frac{c}{1+c} &
      \text{ wp } \frac{1}{c+1} \text{ if } X_n=M_n,
      \\ -2X_n - 2\frac{1 - c}{1+c}X_n = -4X_n\frac{1}{1+c} &
      \text{ wp } \frac{c}{1+c} \text{ if } X_n=M_n,
      \\ 4X_n \frac{1}{1+c} & \text{ wp } \frac{c}{1+c} \text{ if } X_n=m_n,
      \\ -4X_n \frac{c}{1+c} & \text{ wp } \frac{1}{1+c} \text{ if } X_n=m_n.
    \end{cases}
  \end{align*}
  Now, for large $n$, we have
  $M_n = (1+c) \sum_{k<n} 1_{X_k = M_k} + o(\sqrt n)$ by the law of
  large numbers, hence
  \begin{align*}
    2\sum_{k<n} \mathbb E[M_k1_{X_k = M_k\}}] & = 2(1+c) \cdot \mathbb E\Big[\sum_{\ell < k < n}
                                                1_{X_\ell = M_\ell} 1_{X_k = M_k}\Big] + o(\sqrt n)
    \\ & = (1+c) \cdot \mathbb E\Big[\Big(\sum_{k<n}1_{X_k = M_k}\Big)^2\Big] + O(\sqrt n) = \frac{1}{1+c} M_n^2 + O(\sqrt n),
  \end{align*}
  and by symmetry
  \begin{align*}
    \mathbb E[X_n^2] & = n + (1-c)\mathbb E[M_n^2 + m_n^2] + O(\sqrt{n}),
  \end{align*}
  which gives the intuitive result that $\mathbb E[X_n^2]\leq n$ for
  $c>1$ and $\mathbb E[X_n^2]\geq n$ for $c < 1$. Moreover, since
  $0 \leq (M_n - |m_n|)^2$ implies $2M_n |m_n| \leq M_n^2 + |m_n|^2$,
  \begin{align*}
    \frac{R_n^2}{2} = \frac{M_n^2 + 2M_n|m_n| + |m_n|^2}{2} \leq M_n^2 + |m_n|^2 \leq R_n^2,
  \end{align*}
  it also gives the bounds
  \begin{align}\tag{$\ast$}
    \Big|\frac{1-c}{2} J_2(c) \Big| \lesssim \Big|\lim_{n\to\infty}
    \mathbb E\Big[\Big(\frac{X_n}{\sqrt n}\Big)^2\Big] - 1 \Big|\lesssim
    \big|(1-c) J_2(c)\big|.
  \end{align}
  From Figure~\ref{fig:sim}, we see that the left hand side (LHS)
  performs better for $c>1$, while the right hand side (RHS) is better
  for $c<1$. The reason is that for $c>1$, the process is more likely
  to switch between its maximum and minimum, such that
  $M_n \approx |m_n| \approx R_n/2$, leading to
  $M_n^2 + |m_n|^2 \approx R_n^2/2$, while for $c<1$ switching becomes
  less likely and we rather have that $R_n \approx M_n$ or
  $R_n\approx |m_n|$, which gives $M_n^2 + |m_n|^2 \approx R_n^2$.
  \begin{figure*}[htb]
    \begin{center}
    \begin{tikzpicture}
      \node[anchor=south west,inner sep=0] (Bild) at (0,0) {\includegraphics[width=0.5\textwidth,trim=30 30 26 26,clip]{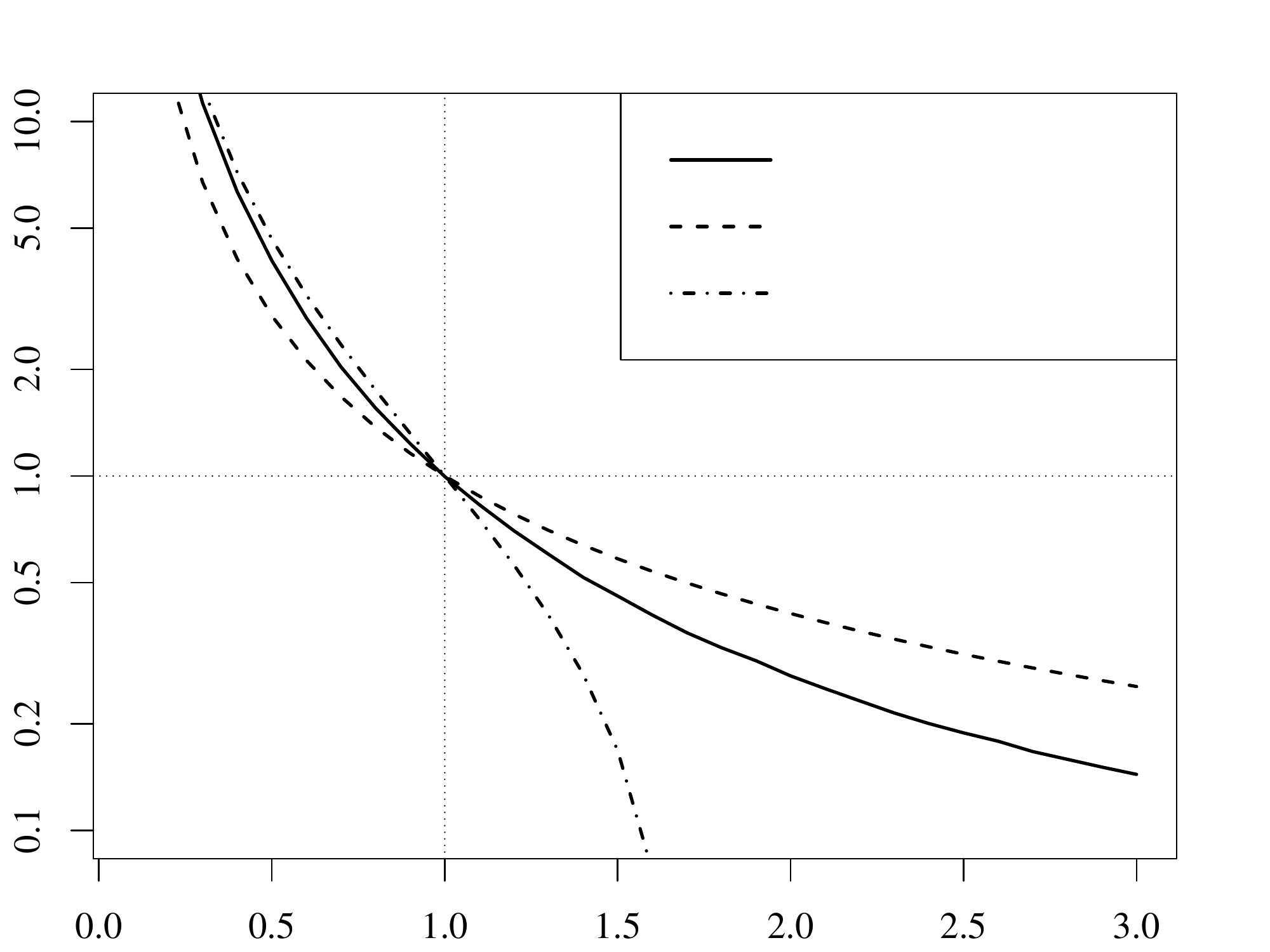}};
      \begin{scope}[x=(Bild.south east),y=(Bild.north west)]
        \draw (-.15, .5) node {\begin{sideways}$\mathbb V[X_n/\sqrt n]$\end{sideways}};
        \draw (.77, .89) node {simulation};
        \draw (.77, .8) node {LHS of ($\ast$)};
        \draw (.77, .71) node {RHS of ($\ast$)};
        \draw (0.45,-0.15) node {$c$ };
        \draw (0.03,-0.05) node {\small $0$ };
        \draw (0.18,-0.05) node {\small $0.5$ };
        \draw (0.33,-0.05) node {\small $1$ };
        \draw (0.48,-0.05) node {\small $1.5$ };
        \draw (0.63,-0.05) node {\small $2$ };
        \draw (0.78,-0.05) node {\small $2.5$ };
        \draw (0.93,-0.05) node {\small $3$ };
        \draw (-0.05,0.07) node[] {\small $0.1$};
        \draw (-0.05,0.2) node[] {\small $0.2$};
        \draw (-0.05,0.36) node[] {\small $0.5$};
        \draw (-0.05,0.5) node[] {\small $1.0$};
        \draw (-0.05,0.63) node[] {\small $2.0$};
        \draw (-0.05,0.79) node[] {\small $5.0$};
        \draw (-0.05,0.93) node[] {\small $10.0$};
      \end{scope}
    \end{tikzpicture} 
    \end{center}
    \caption{\label{fig:sim}Simulating the ORRW for varying
      $c$, we observe $10^5$ independent draws of $X_n/\sqrt n$ for
      $n=10^5$, and compute the observed variance. We compare this to
      the two bounds in ($\ast$).}
  \end{figure*}
\end{remark}

\section{Proof of Theorem~\ref{T1}}
\label{S3}
\subsection{Some preliminairies}
\label{ss:31}
Before we come to the proof of Theorem~\ref{T1}, we need some general
results. First, in Theorem~\ref{T:tauber}, we recall a classical
Tauberian result by Hardy and Littlewood, which will help us to
interprete the generating function of $S_k$ from
\eqref{eq:SRWtimes}. Then, in Lemma~\ref{l:rw}, we recall the
generating function of hitting times for a simple symmetric random
walk.

\begin{theorem}[A Tauberian result]\label{T:tauber}
  Let $a_1, a_2,... \geq 0$ such that $\sum_{n=1}^\infty a_n x^n$
  converges for $|x|<1$. Suppose that for some $\alpha, A\geq 0$
  $$\sum_{n=1}^\infty a_n x^n \stackrel{x\uparrow 1} \sim \frac{A}{(1-x)^\alpha}.$$
  Then,
  $$ \sum_{k=1}^n a_n \stackrel{n\to\infty} \sim \frac{A}{\Gamma(\alpha + 1)}n^\alpha.$$
  Moreover, if $\alpha>1$, and $n\mapsto a_n$ is non-decreasing,
  \begin{align}
    \label{eq:T:tauber2}
    a_n \sim \frac{A\alpha}{\Gamma(\alpha+1)}n^{\alpha-1}.
  \end{align}
\end{theorem}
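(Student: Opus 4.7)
The plan is to establish the first conclusion (asymptotics of the partial sums $s_n:=\sum_{k=1}^n a_k$) via Karamata's polynomial-approximation method, and then derive the pointwise asymptotic for $a_n$ from it by a monotonicity squeeze.

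For the first part, I rewrite the hypothesis as a Laplace-transform statement. Setting $x=e^{-s}$ and using $1-e^{-s}\sim s$ as $s\downarrow 0$, the assumption becomes
$$\varphi(s):=\sum_{n=1}^\infty a_n e^{-sn}\sim \frac{A}{s^\alpha},\qquad s\downarrow 0.$$
For each integer $k\ge 0$, this yields
$$s^\alpha\sum_{n=1}^\infty a_n e^{-sn}(e^{-sn})^k = s^\alpha\varphi((k+1)s)\longrightarrow \frac{A}{(k+1)^\alpha}=\frac{A}{\Gamma(\alpha)}\int_0^\infty t^{\alpha-1}e^{-(k+1)t}\,dt.$$
By linearity, for every polynomial $p$,
$$s^\alpha\sum_{n=1}^\infty a_n e^{-sn}\,p(e^{-sn})\longrightarrow \frac{A}{\Gamma(\alpha)}\int_0^\infty t^{\alpha-1}p(e^{-t})\,dt,$$
and by Weierstrass approximation on $[0,1]$ the same holds for every continuous $f$ in place of $p$. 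Finally I sandwich the discontinuous function $g(x)=\mathbf{1}_{[e^{-1},1]}(x)/x$ between continuous approximants from above and below, obtaining
$$s^\alpha\sum_{n\le 1/s} a_n\longrightarrow \frac{A}{\Gamma(\alpha)}\int_0^1 t^{\alpha-1}\,dt=\frac{A}{\Gamma(\alpha+1)}.$$
Setting $s=1/n$ gives $s_n\sim An^\alpha/\Gamma(\alpha+1)$, which is the first claim.

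For the second conclusion, I use the monotonicity of $(a_n)$ together with the asymptotic just proved. For any $\lambda>1$,
$$(\lfloor\lambda n\rfloor-n)\,a_n\le s_{\lfloor\lambda n\rfloor}-s_n\sim \frac{A(\lambda^\alpha-1)}{\Gamma(\alpha+1)}\,n^\alpha,$$
so $\limsup_n a_n/n^{\alpha-1}\le A(\lambda^\alpha-1)/((\lambda-1)\Gamma(\alpha+1))$; letting $\lambda\downarrow 1$ and using $(\lambda^\alpha-1)/(\lambda-1)\to\alpha$ gives the upper bound. Symmetrically, for $\mu\in(0,1)$,
$$(n-\lfloor\mu n\rfloor)\,a_n\ge s_n-s_{\lfloor\mu n\rfloor}\sim \frac{A(1-\mu^\alpha)}{\Gamma(\alpha+1)}\,n^\alpha,$$
and letting $\mu\uparrow 1$ yields the matching lower bound, giving \eqref{eq:T:tauber2}. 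The hypothesis $\alpha>1$ is only needed to know that this growth rate $n^{\alpha-1}$ is non-trivial; the argument itself works for any $\alpha>0$.

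The main obstacle is the Karamata step in the first part: the test function one really wants is the discontinuous indicator $\mathbf{1}_{[e^{-1},1]}/x$, and the crux of the proof lies in justifying the passage from polynomial test functions to this indicator via the continuous sandwich, which requires controlling the uniform mass of the defect measures $s^\alpha\sum a_n e^{-sn}\delta_{e^{-sn}}$ on $[0,1]$. Once this is done, the monotonicity-squeeze in the second step is entirely routine.
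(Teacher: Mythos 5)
Your argument is correct, but it is worth noting that the paper does not actually prove this theorem: it is quoted as a classical Hardy--Littlewood result, with pointers to Korevaar (Chapter~I.7.4) and to Proposition~12.5.2 of Lawler and Limic. What you have written is essentially a self-contained reconstruction of that classical proof: the first half is Karamata's method (pass to the Laplace-type form $\varphi(s)=\sum a_n e^{-sn}\sim A s^{-\alpha}$, verify the limit on monomials via $s^\alpha\varphi((k+1)s)\to A(k+1)^{-\alpha}=\frac{A}{\Gamma(\alpha)}\int_0^\infty t^{\alpha-1}e^{-(k+1)t}dt$, extend by linearity, Weierstrass, and a sandwich to the discontinuous test function $\mathbf 1_{[e^{-1},1]}(x)/x$), and the second half is the standard monotone-density squeeze, which you execute correctly (both inequalities point the right way, and $(\lambda^\alpha-1)/(\lambda-1)\to\alpha$ closes the gap). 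The only points to tidy up: (i) the Weierstrass and sandwich steps need the uniform mass bound $\sup_{s\le s_0} s^\alpha\varphi(s)<\infty$, which you correctly identify as the crux and which follows immediately from $s^\alpha\varphi(s)\to A$, together with the observation that the limit measure $\frac{A}{\Gamma(\alpha)}t^{\alpha-1}e^{-t}dt$ gives no mass to the discontinuity point, so this is routine but should be stated; (ii) your Gamma-integral identity requires $\alpha>0$ (and the statement is vacuous or needs separate trivial treatment when $A=0$ or $\alpha=0$), whereas the theorem as stated allows $\alpha,A\ge 0$ --- harmless here, since the paper only invokes it with $\alpha=(3+\ell)/2>1$ and $A=K_\ell>0$, but you should either restrict to $\alpha>0$, $A>0$ or dispose of the degenerate cases in a line. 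Your closing remark that the squeeze works for all $\alpha>0$, with $\alpha>1$ only ensuring a nontrivial rate, is accurate.
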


\begin{proof}
  The assertions are classical Tauberian results by Hardy and
  Littlewood; see e.g.\ Chapter~I.7.4 of \cite{Korevaar2013}. Another
  self-contained proof is given in Proposition~12.5.2 in
  \cite{Lawler2010}.
\end{proof}

\noindent
The following lemma is rather standard (see e.g. Chapter XIV.4 in
\citealp{Feller1}), but we provide a proof for completeness.

\begin{lemma}[Generating function of hitting times in simple symmetric
  random walk]\label{l:rw}
  Let $Z$ by a simple symmetric random walk with $Z_0=0$ and
  $a,b>0$. Define $T_{k} := \inf\{n: Z_n = k\}$, the first hitting
  time of $k \in \mathbb Z$, as well as $T:= T_{-a} \wedge T_b$ for
  $a,b\geq 0$. Then,
  \begin{align*}
    \mathbb E[s^{T}] & = \frac{e^{d_s a} + e^{d_s b}}{1 + e^{d_s (a+b)}}
  \end{align*}
  where
  \begin{align}
    \label{eq:cs}
    d_s & := \log\Big(\frac 1s\Big(1 + \sqrt{1-s^2}\Big)\Big).
  \end{align}
\end{lemma}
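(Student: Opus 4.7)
The plan is the standard first-step analysis: condition on the first step of the random walk to turn $\mathbb E[s^T]$ into a linear recurrence in the starting point, solve the recurrence, apply the boundary conditions, and then simplify the resulting fraction into the advertised closed form.

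Concretely, I would first introduce $f(k) := \mathbb E_k[s^T]$ for $-a \leq k \leq b$ (where $\mathbb E_k$ denotes expectation for the walk started at $k$); the quantity we want is $f(0)$. One step of the walk, together with the strong Markov property, gives the three-term recurrence
\begin{align*}
f(k) \;=\; \tfrac{s}{2}\bigl(f(k-1) + f(k+1)\bigr), \qquad -a < k < b,
\end{align*}
with boundary values $f(-a) = f(b) = 1$. This is a linear recursion with characteristic equation $s\lambda^2 - 2\lambda + s = 0$, whose roots are $\lambda_\pm = (1 \pm \sqrt{1-s^2})/s$. For $s \in (0,1)$ both roots are real and positive, and the definition of $d_s$ in \eqref{eq:cs} is exactly $\lambda_+ = e^{d_s}$; since $\lambda_+\lambda_- = 1$, we also have $\lambda_- = e^{-d_s}$. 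Hence the general solution is $f(k) = A e^{d_s k} + B e^{-d_s k}$.

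Next I would determine $A$ and $B$ from the two boundary conditions $f(-a)=f(b)=1$, which gives a $2\times 2$ linear system with determinant $e^{-d_s(a+b)} - e^{d_s(a+b)}$. Solving by Cramer's rule yields
\begin{align*}
A+B \;=\; \frac{e^{d_s a} + e^{d_s b} - e^{-d_s a} - e^{-d_s b}}{e^{d_s(a+b)} - e^{-d_s(a+b)}}.
\end{align*}
Writing $\lambda := e^{d_s}$ and multiplying numerator and denominator by $\lambda^{a+b}$, the numerator factors as $(\lambda^{a+b}-1)(\lambda^a+\lambda^b)$ while the denominator factors as $(\lambda^{a+b}-1)(\lambda^{a+b}+1)$; cancelling the common factor produces exactly the claimed form
\begin{align*}
f(0) \;=\; \frac{e^{d_s a} + e^{d_s b}}{1 + e^{d_s(a+b)}}.
\end{align*}

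The only genuine obstacle is the algebraic factorisation in the last step; once that is spotted the identity is immediate. On the side, I would note (for bookkeeping) that one should restrict to $s \in (0,1]$ so that $d_s$ is real and non-negative, and that the uniqueness of the bounded solution of the recurrence on the finite interval $\{-a,\dots,b\}$ is automatic because the boundary conditions already pin down $A$ and $B$, so no separate appeal to a maximum principle or to finiteness of $T$ is needed beyond the standard fact that $T < \infty$ almost surely.
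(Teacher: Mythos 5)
Your proof is correct, but it takes a genuinely different route from the paper's. You solve the boundary value problem directly: first-step analysis gives the recurrence $f(k)=\tfrac s2\bigl(f(k-1)+f(k+1)\bigr)$ with $f(-a)=f(b)=1$, whose characteristic roots are $e^{\pm d_s}$ (distinct for $s\in(0,1)$, with product $1$ by Vieta), and you then extract $f(0)=A+B$ via Cramer's rule and the factorisation $(\lambda^{a+b}-1)(\lambda^a+\lambda^b)$ over $(\lambda^{a+b}-1)(\lambda^{a+b}+1)$, which does check out. The paper instead uses $\cosh(d_s)=1/s$ to build the exponential martingale $M_n=s^n\bigl(e^{d_s(a+Z_n)}+e^{d_s(b-Z_n)}\bigr)$ and applies optional sampling to the bounded martingale $M_{T\wedge n}$, giving $e^{d_s a}+e^{d_s b}=\mathbb E\bigl[s^T\bigl(1+e^{d_s(a+b)}\bigr)\bigr]$ in one line: the symmetric combination of the two exponential martingales is chosen so that $M_T$ equals $s^T(1+e^{d_s(a+b)})$ regardless of which boundary point is hit, so no linear system and no factorisation are needed. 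Your approach buys elementarity (only the Markov property and the theory of linear recurrences, no optional sampling theorem) and yields $\mathbb E_k[s^T]$ for every starting point $k$, not just $k=0$; its cost is the extra algebra plus the bookkeeping you correctly flag yourself: distinct roots require $s\in(0,1)$ (the case $s=1$ follows by continuity), the determinant $e^{-d_s(a+b)}-e^{d_s(a+b)}$ is nonzero since $d_s>0$ and $a+b>0$, and one should either invoke $T<\infty$ a.s.\ or the convention $s^\infty=0$ when setting up the first-step identity. Both arguments are complete proofs of the lemma.
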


\begin{proof}
  Recall $\cosh(r) := \frac{e^r + e^{-r}}{2}$ and note that
  $\cosh(d_s) = 1/s$.
  For any $r\in\mathbb R$, using that
  $$\mathbb E \left[e^{r Z_n}\right] = \mathbb E \left[e^{-r
      Z_n}\right] = \prod_{k=1}^n \frac{e^r + e^{-r}}{2} =
  \cosh(r)^n,$$ the stochastic process
  $(e^{rZ_n} / \mathbb E[e^{rZ_n}])_{n=0,1,2,...}$ is a martingale.
  Therefore, using $r= \pm d_s$,
  \begin{align*}
    M_n & := e^{d_sa} \frac{e^{d_s Z_n}}{\mathbb{E}\left[e^{d_s Z_n}\right]} + e^{d_sb}
    \frac{e^{-d_s Z_n}}{\mathbb{E}\left[e^{-d_s Z_n}\right]} = \frac{e^{d_s(a+Z_n)} + e^{d_s(b-Z_n)}}{\cosh(d_s)^n}
    \\ & = s^n(e^{d_s(a+Z_n)} + e^{d_s(b-Z_n)})
  \end{align*}
  is a martingale as well. We apply the optional sampling theorem to
  the bounded martingale $M_{T\wedge n}$ to obtain
  \begin{align*}
    e^{d_s a}+e^{d_sb} &= \mathbb E \left[M_0\right] = \mathbb E\left[M_{T}\right]
                         = \mathbb E\left[s^T\left(1+e^{d_s(b+a)}\right)\right].
  \end{align*}
  From this, we read off the
  result.
\end{proof}

\subsection{Proof of Theorem~\ref{T1}}
\label{ss:32}
Our analysis of the moments of $R_n$ will be done via an analysis of
the generating function of the random variable $S_k$, which we defined
in~\eqref{eq:SRWtimes}. We start by computing the generating function
of $S_k$.

\begin{lemma}[Generating function of $\tau_i, T_i$ and
  $S_k$]\label{l:gentaui}
  Fix $s\in (0,1)$, recall $d_s$ from \eqref{eq:cs} and let
  \begin{align}
    \label{eq:gG} g_x(s):= \frac{e^{d_s} + e^{d_s(x-1)}}{1+e^{d_s x}}, \qquad
    G_x(s):= \frac{s}{1+c - cs g_x(s)}.
  \end{align}
  Then, for $\tau_i$ and $T_i$ as in \eqref{eq:SRWtimes}, the
  generating functions are given by
  \begin{align*}
    \mathbb E[s^{\tau_i}] = g_i(s), \qquad \mathbb E[s^{T_i}] = G_i(s), \quad i=2,3,...
  \end{align*}
  Moreover, the generating function of $S_k$ is given by
  \begin{align}\label{eq:genSk}
    \mathbb E[s^{S_k}] = s \prod_{i=1}^{k-1} G_i(s), \qquad k=1,2,...
  \end{align}
\end{lemma}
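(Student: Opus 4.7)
The plan is to unpack the decomposition in \eqref{eq:SRWtimes} and compute each generating function in turn, appealing to Lemma~\ref{l:rw} for the random walk piece and to the generating function of a shifted geometric distribution for the outer sum.

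\textbf{Step 1: The generating function of $\tau_i$.} Recall that $\tau_i$ is the hitting time of $\{-1, i-1\}$ for a simple symmetric random walk started at $0$. I would apply Lemma~\ref{l:rw} with $a = 1$ and $b = i-1$, which immediately yields
\[
\mathbb E[s^{\tau_i}] = \frac{e^{d_s} + e^{d_s(i-1)}}{1 + e^{d_s i}} = g_i(s).
\]

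\textbf{Step 2: The generating function of $T_i$.} From \eqref{eq:SRWtimes}, $T_i = 1 + \sum_{j=1}^{Y_i}(1 + \tau_i^j)$, where $Y_i$ is a shifted geometric with success probability $p = 1/(1+c)$, independent of the i.i.d.\ copies $\tau_i^j$. Using $\mathbb E[z^{Y_i}] = p/(1-(1-p)z) = 1/(1+c-cz)$ for the shifted geometric and conditioning on $Y_i$,
\[
\mathbb E[s^{T_i}] = s \cdot \mathbb E\bigl[(s\,\mathbb E[s^{\tau_i}])^{Y_i}\bigr] = s \cdot \frac{1}{1 + c - c s g_i(s)} = G_i(s).
\]

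\textbf{Step 3: The generating function of $S_k$.} From \eqref{eq:SRWtimes} we have $S_k = 1 + \sum_{i=1}^{k-1} T_i$, so once the $T_i$ are known to be independent it follows that $\mathbb E[s^{S_k}] = s \prod_{i=1}^{k-1} G_i(s)$, which is \eqref{eq:genSk}.

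\textbf{Main obstacle.} The one point that needs care, rather than being a routine calculation, is justifying that the $T_i$ are independent and that each $T_i$ really does have the stated structure of a geometric sum of independent $\tau_i^j$'s. The argument is a strong Markov argument at the stopping times $S_i$: conditionally on $R_{S_i} = i$, the walker sits at either end of its current range $\{m_{S_i},\dots,M_{S_i}\}$, and the only information about the past that matters for future increments is which sites have already been visited, which by definition of $S_i$ is exactly this range. From that configuration, each attempt to increase the range is an independent Bernoulli$(1/(1+c))$; a failure sends the walker one step inside the range, after which it performs a simple symmetric random walk (since every edge inside the range has weight $c$) until it returns to one of the extremes, contributing an independent copy of $\tau_i$. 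This structure is independent of $T_1,\dots,T_{i-1}$, which gives both independence across $i$ and the claimed distributional identity; Steps 1--3 then conclude the proof.
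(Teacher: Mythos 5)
Your proposal is correct and follows essentially the same route as the paper: Lemma~\ref{l:rw} with $a=1$, $b=i-1$ for $\tau_i$, the shifted geometric generating function $1/(1+c-cs g_i(s))$ via conditioning on $Y_i$ for $T_i$, and independence of the $T_i$ for the product formula \eqref{eq:genSk}. Your additional strong Markov justification of the decomposition \eqref{eq:SRWtimes} is sound; the paper treats that structural point as given in the discussion preceding \eqref{eq:SRWtimes} rather than in the proof of the lemma.
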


\begin{proof}
  The first claim follows directly from Lemma~\ref{l:rw}. For the
  generating function of $T_i$, note that the generating function of
  $Y_i \sim \text{geo}(1/(1+c))$ is
  $$ s\mapsto \frac{1}{c+1} \sum_{k=0}^\infty \Big(\frac{cs}{c+1}\Big)^k = \frac{1}{1+c}
  \cdot \frac{c+1}{c+1-cs} = \frac{1}{1 + c - cs},$$ hence
  \begin{align*}
    \mathbb E[s^{T_i}] & = s\mathbb E\Big[\mathbb E\Big[ \prod_{j=1}^{Y_i} s^{1+\tau_i^j} \Big|Y_i \Big]\Big]
                         = \mathbb E[(sg_i(s))^{Y_i}] = \frac{s}{1+c - csg_i(s)}.
  \end{align*}
  The form of the generating function of $S_k$ follows from
  \eqref{eq:SRWtimes}.
\end{proof}

\begin{lemma}[A generating function for moments of $R_n$]\label{l:Hls}
  Recall $G_x(s)$ from \eqref{eq:gG}. Then, for $s\in (0,1)$ and
  $\ell=0,1,2,...$
  $$ H_\ell(s) := \sum_{n=1}^\infty s^n \mathbb E[R_n \cdots (R_n + \ell)]
  = \frac{\ell+1}{1-s} \sum_{k=1}^\infty k \cdots (k+\ell-1)
  s\prod_{i=1}^{k-1} G_i(s).$$
\end{lemma}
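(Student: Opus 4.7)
The plan is to reduce the computation of $H_\ell(s)$ to the already-known generating function of $S_k$ via a standard tail-to-moment manipulation. The three ingredients are: the combinatorial identity for rising factorials, the duality $\{R_n \geq k\} = \{S_k \leq n\}$, and Lemma~\ref{l:gentaui}.

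First, I would record the rising-factorial identity
\begin{align*}
m(m+1)\cdots(m+\ell) = (\ell+1)\sum_{k=1}^{m} k(k+1)\cdots(k+\ell-1), \qquad m\in\mathbb N_0,
\end{align*}
which is elementary (check $\ell=0$ and induct, or use $\binom{m+\ell+1}{\ell+1}$ as a telescoping sum of $\binom{k+\ell-1}{\ell}$). Applied pointwise to $m=R_n$ and then taking expectation gives
\begin{align*}
\mathbb E[R_n(R_n+1)\cdots(R_n+\ell)] = (\ell+1)\sum_{k=1}^\infty k(k+1)\cdots(k+\ell-1)\,\mathbb P(R_n\geq k),
\end{align*}
where the infinite sum is legitimate because all terms are nonnegative and only terms with $k\leq R_n$ contribute.

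Next, since $k\mapsto R_n$ is nondecreasing in $n$ with unit jumps, the definition $S_k=\inf\{n:R_n=k\}$ yields $\{R_n\geq k\}=\{S_k\leq n\}$. Multiplying by $s^n$, summing over $n\geq 1$, and applying Tonelli (everything is nonnegative) I would swap the two sums to obtain
\begin{align*}
H_\ell(s) = (\ell+1)\sum_{k=1}^\infty k(k+1)\cdots(k+\ell-1)\sum_{n=1}^\infty s^n\,\mathbb P(S_k\leq n).
\end{align*}
The inner sum is evaluated by writing $\mathbb P(S_k\leq n)=\sum_{m=1}^n \mathbb P(S_k=m)$ (using $S_k\geq 1$ from \eqref{eq:SRWtimes}), swapping again, and summing the geometric series in $n$:
\begin{align*}
\sum_{n=1}^\infty s^n\mathbb P(S_k\leq n) = \sum_{m=1}^\infty \mathbb P(S_k=m)\frac{s^m}{1-s} = \frac{\mathbb E[s^{S_k}]}{1-s}.
\end{align*}

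Finally, inserting the formula $\mathbb E[s^{S_k}] = s\prod_{i=1}^{k-1} G_i(s)$ from Lemma~\ref{l:gentaui} gives exactly the claimed expression. The only step that needs a moment's thought is the double interchange of summation, but since the summands are nonnegative and $s\in(0,1)$, Tonelli applies without any further estimate; so there is no real obstacle, and the proof is essentially a bookkeeping exercise around the identity $\{R_n\geq k\}=\{S_k\leq n\}$.
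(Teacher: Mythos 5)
Your proof is correct and takes essentially the same route as the paper: the duality $\{R_n\geq k\}=\{S_k\leq n\}$, the telescoping identity for rising factorials (which you apply pointwise inside the expectation, where the paper performs the equivalent Abel summation on $\mathbb P(S_k\leq n)-\mathbb P(S_{k+1}\leq n)$), the geometric-series evaluation of $\sum_n s^n\mathbb P(S_k\leq n)$, and Lemma~\ref{l:gentaui}. Nothing further is needed.
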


\begin{proof}
  We will use, for $k, n=1,2,...$
  \begin{align*}
    \mathbb P(R_n=k) &= \mathbb P( R_n<k+1 ) - \mathbb P( R_n<k ) = \mathbb P( S_{k+1}>n ) - \mathbb P( S_k>n )
    \\ & = \mathbb P(S_k\le n ) - \mathbb P(S_{k+1}\le n).
  \end{align*}
  Then,
  \begin{align*}
    H_\ell(s) & 
           = \sum_{k=1}^\infty \sum_{n=1}^\infty s^n k \cdots (k+\ell) (\mathbb P(S_k\leq n) - \mathbb P(S_{k+1} \leq n))
    \\ & = \sum_{k=1}^\infty \sum_{n=1}^\infty s^n (k \cdots (k+\ell) - (k-1) \cdots (k+\ell-1)) \mathbb P(S_k\leq n)
    \\ & = (\ell+1) \sum_{k=1}^\infty \sum_{n=1}^\infty \sum_{i=1}^n s^n k\cdots (k+\ell-1) \mathbb P(S_k=i)
    \\ & = (\ell+1) \sum_{k=1}^\infty k\cdots (k+\ell-1)  \sum_{i=1}^\infty \mathbb P(S_k=i) \sum_{n=i}^\infty s^n
    \\ & = \frac{\ell+1}{1-s} \sum_{k=1}^\infty k\cdots (k+\ell-1) \sum_{i=1}^\infty \mathbb P(S_k=i) s^i
  \end{align*}
  and the result follows from \eqref{eq:genSk}.
\end{proof}

\begin{proposition}[Asymptotics of $H_\ell$]\label{P:Hell}
  Let $\ell=0,1,2,...$ and $H_\ell$ as in Lemma~\ref{l:Hls}. Then,
  \begin{align*}
    H_\ell(s) \stackrel{s\uparrow 1}\sim K_\ell (1-s)^{-(3+\ell)/2},
  \end{align*}
  where
  $$ K_\ell = \frac{\ell+1}{2^{(\ell+1)/2}} 2^{2c} 
  \int_0^\infty x^{\ell} \Big(\frac{e^x}{(e^x+1)^2} \Big)^c dx.$$
\end{proposition}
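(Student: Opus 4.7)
The plan is to analyse $H_\ell(s)$ as $s \uparrow 1$ by rewriting everything in terms of the small parameter $\varepsilon := d_s$. Since $\cosh(d_s) = 1/s$, one has $\tanh(\varepsilon) = \sqrt{1-s^2}$, hence $\varepsilon \sim \sqrt{2(1-s)}$. All the analysis will live on the scale $k \sim 1/\varepsilon$.

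The decisive first step is an algebraic simplification of $G_i(s)$. Writing $s = 1/\cosh\varepsilon$ and computing directly, the numerator $(e^\varepsilon + e^{-\varepsilon})(1+e^{\varepsilon i}) - 2(e^\varepsilon + e^{\varepsilon(i-1)})$ collapses to $(e^\varepsilon - e^{-\varepsilon})(e^{\varepsilon i} - 1)$, yielding the clean identity
\begin{equation*}
1 - sg_i(s) = \tanh(\varepsilon)\tanh(\varepsilon i/2), \qquad G_i(s) = \frac{s}{1 + c\tanh(\varepsilon)\tanh(\varepsilon i/2)}.
\end{equation*}
Taking the log of the product and expanding with $\log(1+y) = y + O(y^2)$ for $y = O(\varepsilon)$ together with $\tanh(\varepsilon) = \varepsilon + O(\varepsilon^3)$ reduces $\sum_{i=1}^{k-1}\log(1 + c\tanh\varepsilon\tanh(\varepsilon i/2))$ to a step-$\varepsilon$ Riemann sum for $c\int_0^{\varepsilon k}\tanh(u/2)\,du = 2c\log\cosh(\varepsilon k/2)$. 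Hence
\begin{equation*}
\prod_{i=1}^{k-1}G_i(s) = s^{k-1}\cosh(\varepsilon k/2)^{-2c}(1 + o(1))
\end{equation*}
uniformly for $k \leq L/\varepsilon$ for any fixed $L$, while for $k \gg 1/\varepsilon$ the bound $\cosh(\varepsilon k/2)^{-2c} \leq 2^{2c}e^{-c\varepsilon k}$ provides exponential tail decay.

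The second step substitutes this into Lemma~\ref{l:Hls} and recognises the resulting sum as a Riemann sum in $u = \varepsilon k$. Using $k(k+1)\cdots(k+\ell-1) = k^\ell(1 + O(1/k))$ and $k\log s = O(L\varepsilon) \to 0$ for $k \leq L/\varepsilon$, one obtains
\begin{equation*}
\sum_{k=1}^\infty k(k+1)\cdots(k+\ell-1)\,s\prod_{i=1}^{k-1}G_i(s) \sim \varepsilon^{-\ell-1}\int_0^\infty u^\ell\cosh(u/2)^{-2c}\,du.
\end{equation*}
Combined with the identity $\cosh(u/2)^{-2c} = 2^{2c}\bigl(e^u/(e^u+1)^2\bigr)^c$, the prefactor $(\ell+1)/(1-s) \sim 2(\ell+1)/\varepsilon^2$, and $\varepsilon^{\ell+3} \sim (2(1-s))^{(\ell+3)/2}$, a direct bookkeeping reduces the right-hand side of the formula in Lemma~\ref{l:Hls} to $K_\ell(1-s)^{-(\ell+3)/2}$, with the power of $2$ working out as $2\cdot 2^{-(\ell+3)/2} = 2^{-(\ell+1)/2}$.

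The main obstacle is making the two Riemann-sum approximations rigorous and uniform. This requires explicit control of $|\log(1+y) - y| \leq y^2$ and $|\tanh\varepsilon - \varepsilon| = O(\varepsilon^3)$, a $C^1$-Riemann-sum error estimate on the body $k \leq L/\varepsilon$ (available because $\tanh(\cdot/2)$ has bounded derivative), together with the integrable dominating function provided by $\cosh^{-2c}(u/2) \leq 2^{2c}e^{-cu}$ at infinity. A dominated-convergence argument then justifies sending $\varepsilon \to 0$ first (on each compact $u$-interval) and $L \to \infty$ afterwards to recover the full integral over $(0,\infty)$.
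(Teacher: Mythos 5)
Your proposal is correct and takes essentially the same route as the paper: your exact identity $1-sg_i(s)=\tanh(d_s)\tanh(d_s i/2)$ is precisely the paper's rewriting $g_x(s)=\tfrac1s\bigl(1-a(d_s x)\sqrt{1-s^2}\bigr)$ with $a(u)=\tanh(u/2)$ and $\sqrt{1-s^2}=\tanh(d_s)$, and your Riemann-sum limit at scale $k\asymp 1/\varepsilon$ with exponential tail domination is the paper's passage to $\int_0^\infty x^\ell\exp\bigl(-c\int_0^x a(y)\,dy\bigr)dx$. The only cosmetic differences are that you parametrize by $\varepsilon=d_s$ rather than $t=1-s$ and evaluate $\int_0^u\tanh(y/2)\,dy=2\log\cosh(u/2)$ in closed form before summing, and your bookkeeping reproduces $K_\ell$ correctly.
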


\begin{proof}
  We define
  $$ a: x\mapsto \frac{e^x-1}{e^x+1}$$
  and write for $g_.(s)$ as in \eqref{eq:gG}, using the definition of
  $d_s$,
  \begin{align*}
    g_{x}(s) & =  \frac{(1 + \sqrt{1-s^2})/s + e^{d_s x} s/(1+\sqrt{1-s^2})}{1+e^{d_s x}}
    \\ & = \frac 1s \Big(1 - \frac{e^{d_s x} - \sqrt{1-s^2} - e^{d_s x} s^2/(1 + \sqrt{1-s^2})}{1+e^{d_s x}}\Big)
    \\ & = \frac 1s \Big(1 - \frac{e^{d_s x} \frac{1 + \sqrt{1-s^2} - s^2}{1+\sqrt{1-s^2}} - \sqrt{1-s^2}}{1+e^{d_s x}}\Big)
    \\ & = \frac 1s \Big(1 - \sqrt{1-s^2}\frac{e^{d_s x} - 1}{e^{d_s x}+1}\Big)
         = \frac 1s\Big(1 - a(d_s x)\sqrt{1-s^2}\Big).
  \end{align*}
  as well as
  \begin{align*}
    G_{x}(s) & = \frac{s}{1+ca(d_s x) \sqrt{1-s^2} }.
  \end{align*}
  In the sequel, we will use $t:=t(s) := 1-s$ throughout, set
  $$ f_t := d_{1-t} = \text{arcosh}\Big(\frac{1}{1-t}\Big) = \log\Big(\frac{1}{1-t}\Big(1 + \sqrt{t(2-t)}\Big)\Big)$$
  and note that
  \begin{align*}
    f_t & = - \log(1-t) + \log\Big(1 + \sqrt{t(2-t)}\Big) = t  + \sqrt{t(2-t)} - \tfrac t2(2-t) + O(t^{3/2})
    \\ & = \sqrt{2t} + O(t^{3/2})
  \end{align*}
  as $t\to 0$. Therefore,
  \begin{align*}
    g_x(1-t) & = \frac 1{1-t}\Big(1 - a(f_tx)\sqrt{t(2-t)}\Big)
    \\ & = (1+O(t))\Big(1 - a(\sqrt{2t}x)\sqrt{2t}(1 + O(t))\Big)
    \\ & = 1 - a(\sqrt{2t}x) \sqrt{2t} + O(t)
         \intertext{and}
         G_x(1-t) & = \frac{1-t}{1 + c a(f_tx)\sqrt{t(2-t)}}
                    = (1-t)(1-c a(\sqrt{2t}x) \sqrt{2t}(1 + O(t)) + O(t))
    \\ & = 1-ca(\sqrt{2t}x) \sqrt{2t} + O(t).
  \end{align*}
  The latter can now be used for (note that the empty product arising
  for $\ell=0$ is defined to be~1)
  \begin{align*}
    H_\ell(1-t) & t^{(3 + \ell)/2} = \frac{\ell+1}{2^{(\ell+1)/2}} \sum_{k=1}^\infty \sqrt{2t}
                  \Big(\prod_{j=0}^{\ell-1} \sqrt{2t}(k+j)\Big) (1-t)\exp\Big( \sum_{i=1}^{k-1}
                  \log G_i(1-t)\Big)
    \\ & = \frac{\ell+1}{2^{(\ell + 1)/2}} \sum_{k=1}^\infty \sqrt{2t} \Big(\prod_{j=0}^{\ell-1} \sqrt{2t}(k+j)\Big) \exp\Big(
         - \sum_{i=1}^{k-1} c a(\sqrt{2t}i) \sqrt{2t} + O(t)\Big)
    \\ & = \frac{\ell+1}{2^{(\ell + 1)/2}} \int_0^\infty x^\ell \exp\Big(
         - \int_0^x c a(y) dy\Big) dx \cdot (1 + o(1)),
  \end{align*}
  where we have used two approximations of integrals with Riemann-sums
  (with $\sqrt{2t} \approx dx, dy$). The result follows from
  \begin{align*}
    & \int_0^x a(y) dy \stackrel{z=e^y}= \int_1^{e^x} \frac{z-1}{z(z + 1)} dz =
      \int_1^{e^x} \frac{2}{z+1} - \frac{1}{z} dz = 2\log(e^x + 1) - 2\log 2 - x
      \intertext{and}
    & \exp\big(-c(2\log(e^x+1) - 2\log 2 - x)\big) = 2^{2c}\Big(\frac{e^x}{(e^x+1)^2} \Big)^c.
  \end{align*}
\end{proof}

\begin{proof}[Proof of Theorem~\ref{T1}]
  We now combine the results of Proposition~\ref{P:Hell} with the
  Tauberian result from Theorem~\ref{T:tauber}. We obtained in
  Proposition~\ref{P:Hell} for
  $a_n = \mathbb E[R_n \cdots (R_n + \ell)]$ that
  \begin{align*}
    H_\ell(s) = \sum_{n=1}^\infty a_n s^n \stackrel{s\uparrow 1} \sim \frac{K_\ell}{(1-s)^{(3+\ell)/2}}
  \end{align*}
  and we can apply Theorem~\ref{T:tauber} with $A = K_\ell$ and
  $\alpha = (3 + \ell)/2$. In particular, \eqref{eq:T:tauber2} gives
  that
  \begin{align*}
    \mathbb E[R_n \cdots (R_n + \ell)] \sim \frac{K_\ell (3 + \ell)}{2\Gamma((5+\ell)/2)} n^{(1+\ell)/2}.
  \end{align*}
  Since $\Gamma(x+1) = x\Gamma(x)$, this implies
  \begin{align*}
    \mathbb E[R_n^{\ell+1}] & \sim \frac{1}{2^{(\ell-1)/2}
                              \Gamma((\ell+1)/2)} \cdot J_{\ell+1}(c) \cdot n^{(\ell+1)/2}, \qquad \ell=0,1,2,...
                              \intertext{with}
                              J_\ell(c) & := 2^{2c} \int_0^\infty \Big( \frac{e^x}{(e^x+1)^2}\Big)^c dx
  \end{align*}
  and we are done.
\end{proof}

\subsubsection*{Acknowledgements}
We thank Tanja Schilling for introducing us to the model of the hungry
random walk.

\end{document}